\pgfplotsset{compat=1.11}
\newtheorem{theorem}{Theorem}
\newtheorem{lemma}{Lemma}
\newtheorem{corollary}{Corollary}
\DeclareMathOperator*{\argmin}{argmin}
\providecommand{\subjclass}[1]{\textbf{2010 AMS Subject Class:} #1}
\providecommand{\keywords}[1]{\textbf{Keywords:} #1}
\title{Independent Domination in Directed Graphs}
\author[1\footnote{Email: macary@mix.wvu.edu}]{Michael Cary}
\author[2]{Jonathan Cary}
\author[3\footnote{Email: drsavariprabhu@gmail.com}]{Savari Prabhu}
\affil[1]{Division of Resource Economics and Management, West Virginia University}
\affil[2]{Department of Mathematics, Virginia Commonwealth University}
\affil[3]{Department of Mathematics, Sri Venkateswara College of Engineering, Sriperumbudur 602117, India}
\begin{document}
\maketitle
\begin{abstract}
In this paper we initialize the study of independent domination in directed graphs. We show that an independent dominating set of an orientation of a graph is also an independent dominating set of the underlying graph, but that the converse is not true in general. We then prove existence and uniqueness theorems for several classes of digraphs including orientations of complete graphs, paths, trees, DAGs, cycles, and bipartite graphs. We also provide the idomatic number for special cases of some of these families of digraphs.
\end{abstract}

\subjclass{05C69; 05C20; 94A05}

\keywords{dominating set; independent set; independent domination; independent dominating set; idomatic number}

\section{Introduction}
The dominating set problem has its basis in finding efficient communication points for transmitting information throughout a network \cite{dunbar2006broadcasts}. Finding a minimum set of vertices which dominates the rest of the graph corresponds to finding a smallest possible source comprised of members of the network from which to spread information throughout the rest of the network in the most efficient manner possible \cite{wu1999calculating}. It cannot be understated that, while minimum may and typically does refer to the cardinality of the dominating set, it could also refer to a minimum dominating set with respect to any other metric, e.g., a minimum dominating set with respect to the cost of obtaining and/or maintaining the dominating set, or minimum in the sense of minimizing the cost of broadcasting information given some cost functional. However, in the context of this paper, any question about minimum dominating sets will be with respect to the cardinality.

To reflect the diversity of real world communication (or other) network requirements, different sets of constraints are often imposed on dominating sets. One such instance occurs when all members of the dominating set need to be connected, such as in backbones of ad hoc (non-time-invariant) networks \cite{dai2006constructing}. Another instance occurs when the dominating set needs to be independent, such as is the case when transmitting information that is at risk of fading \cite{wu2006extended}. It is this version of the dominating set problem, that of independent dominating sets, that will be the focus of this paper. Particularly, as a network may admit only directional flows (i.e., information is transmitted in only one direction), we will consider the case of independent dominating sets in directed graphs.

Formally, a dominating set is a subset $D\subseteq V(G)$ of the vertex set $V(G)$ of a graph $G$ satisfying $D\cup N(D) = V$. A minimum dominating set is a solution $\hat{D}\in\argmin\limits_{D\in\mathcal{D}(G)} |D|$ where $\mathcal{D}(G)$ denotes the set of all dominating sets of $G$. An independent dominating set is a dominating set that is also an independent set. Work on the undirected version of this topic can be traced back to \cite{cockayne1976disjoint}. More recent graph theoretic results include showing that the problem of finding a minimum independent dominating set is, in general, NP-Hard \cite{irving1991approximating}, a result which was recently extended to independent rainbow domination \cite{shao2019independent}. Several Nordhaus-Gaddum type results on independent domination were established in \cite{cockayne1995nordhaus}. Additional research on bounding the independent domination numbers of graphs, i.e., the size of a smallest independent dominating set, is vast. For example, \cite{favaron1990bound} improved bounds on the independent domination number of trees, \cite{glebov1998independent} established bounds for graphs with given minimum degree, \cite{ma2004note} proved results on independent dominating sets in bipartite graphs, and \cite{sun1999upper} established general upper bounds for the independent domination number of graphs. Other results on independent dominating sets include results on random cubic graphs \cite{duckworth2002minimum} and random regular graphs \cite{duckworth2006independent}. The approaches developed to find independent dominating sets in graphs have led not only to progress in other areas of domination such as dominator colorings of graphs \cite{gera2007dominator} and digraphs \cite{cary2019dominator}, but have also led to applications in decycling graphs \cite{bau2002decycling} as well as in mathematical chemistry \cite{prabhu2018independent}.

In order to discuss dominating and independent dominating sets in digraphs, a few important distinctions pertaining to notation need to be established. First, all (di)graphs considered are both simple and finite. The term digraph will be used as a general term to discuss orientations of graphs. This clarification is vital, as the adjective ``directed" will refer to the literal directed case of a graph, e.g., a directed path $P_{n}=v_{1}v_{2}\dots v_{n}$ which has the arc set $A(P_{n})=\{v_{i}v_{i+1}\ |\ 1\leq i<n\}$. Since we are discussing digraphs, $D$ will be reserved for referencing digraphs and $G$ will refer to the underlying (undirected) graph of the digraph $D$. The standard notation for the vertex set will be used ($V(D)=V(G)$), and the edge set $E(G)$ becomes the arc set $A(D)$. Independent dominating sets (and dominating sets) will be referred to using the notation $ID$ with superscript signifiers used to distinguish specific (independent) dominating sets. For any vertex $v\in V(D)$, the in-degree and out-degree of $v$ will be represented by $d^{-}(v)$ and $d^{+}(v)$, respectively. The open in-neighborhood of $v$, the set of vertices with arcs leading into $v$, will be denoted by $N^{-}(v)=\{u\in V(D)\ |\ uv\in A(D)\}$. The open out-neighborhood of $v$ is similarly defined as $N^{+}(v)=\{u\in V(D)\ |\ vu\in A(D)\}$. The closed in- and out-neighborhoods of $v$, the union of the open in- and out-neighborhoods of $v$ with $v$ itself, are denoted by $N^{-}[v] = N^{-}(v)\cup\{v\}$ and $N^{+}[v] = N^{+}(v)\cup\{v\}$, respectively. The symmetric difference of two sets $A$ and $B$ is denoted $A\Delta B$ and represents the set $(A\cup B)\setminus (A\cap B)$, i.e., the union minus the intersection. The reversal of a digraph $D$, denoted $D^{-}$, has the same vertex set as $D$, but has the direction of each arc reversed from its orientation in $A(D)$. The domatic number, defined as the maximum number of pairwise disjoint dominating sets \cite{farber1984domination}, is typically denoted by $d(G)$ for some graph $G$. Since we are studying independent domination in digraphs, we study a variant called the idomatic number, defined as the maximum number of pairwise disjoint independent dominating sets, denoted by $id(D)$ for some digraph $D$. Idomatic numbers have been studied in undirected graphs \cite{valencia2010idomatic} and \cite{zelinka1983adomatic}, but remain a novel topic in digraphs. Finally, a trivial (di)graph is a (di)graph on one vertex. Any additional uses of notation may be assumed to come from the standard reference texts on domination \cite{haynes1998} and \cite{haynes2017domination}. For a reference paper specifically dedicated to results on independent dominating sets in graphs, the reader is referred to \cite{goddard2013independent}.

Throughout this paper, all (di)graphs are assumed to be simple, connected, and finite. The remainder of this paper will begin by asking and answering a natural question relating independent dominating sets in digraphs to independent dominating sets in the underlying graph of the digraph. To help illustrate this initial result, we first study the existence and uniqueness of independent dominating sets in tournaments (orientations of complete graphs). We then study the existence and uniqueness of independent dominating sets in orientations of paths, trees, DAGs (directed acyclic graphs), cycles, and bipartite graphs. Once these results are in place, we build on them by providing some initial results on the idomatic number of special cases of some of these families of digraphs. The paper will then conclude with a provision of possible avenues for furthering this line of research.

\section{Independent Dominating Sets in Digraphs}
We begin our study by asking a fundamental question about how orienting a graph might affect independent dominating sets. If orientation was not relevant, then the study of independent dominating sets and idomatic number of digraphs would be irrelevant. It may appear obvious (and the following theorem shows that it is true) that an independent dominating set of a digraph is also an independent dominating set of its underlying graph. It turns out, however, that orientation is crucial to determining whether or not a given independent dominating set of the underlying graph of a digraph is an independent dominating set in any particular orientation thereof. Given a graph $G$ and an independent dominating set $ID$ of $G$, not every orientation of $G$ will preserve the property that $ID$ remains an independent dominating set.

\begin{theorem}
The following statements about independent dominating sets are all true:
\begin{itemize}
\item [1:] {Every independent dominating set $ID$ of a digraph $D$ in an independent dominating set of the underlying (undirected) graph $G$.}
\item [2:] {For every graph $G$ there exists some orientation $D$ such that, for any particular independent dominating set $ID$ of $G$, $ID$ is an independent dominating set of $D$.}
\item [3:] {For every non-trivial graph $G$, there exists some orientation $D$ such that, for any particular independent dominating set $ID$ of $G$, $ID$ is not an independent dominating set of $D$.}
\end{itemize}
\end{theorem}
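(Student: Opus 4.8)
The plan is to treat the three parts separately, using one observation that carries most of the load: whether a vertex set is independent does not depend on orientation, since $uv$ is an arc of $D$ exactly when $uv$ is an edge of $G$, so a set is independent in $D$ if and only if it is independent in $G$. Hence the only thing that can differ between $G$ and any orientation $D$ is the domination condition, which in $D$ requires, for each $v \notin ID$, that some $u \in ID$ have the arc $uv \in A(D)$, equivalently $v \in N^{+}(u)$. Part~(1) is then immediate: if $ID$ is an independent dominating set of $D$ it is independent in $G$ by the observation, and for $v \in V(G)\setminus ID$ the arc $uv \in A(D)$ supplied by domination in $D$ is in particular an edge $uv \in E(G)$, so $ID$ dominates $G$. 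The point is simply that requiring an in-arc from $ID$ in a digraph is stronger than requiring an edge to $ID$ in the underlying graph.

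For part~(2) I would read the statement as: given any fixed independent dominating set $ID$ of $G$, there is an orientation in which $ID$ remains an independent dominating set. (A single orientation cannot preserve all of them at once, as $K_{2}$ already shows, so the orientation must be allowed to depend on $ID$.) Construct $D$ by orienting each edge that has exactly one endpoint in $ID$ from that endpoint to its endpoint outside $ID$ (no edge has both endpoints in $ID$, since $ID$ is independent), and orienting every remaining edge arbitrarily. Then $ID$ is still independent, and for any $v \notin ID$ a neighbor $u \in ID$ of $v$ --- which exists because $ID$ dominates $G$ --- yields the arc $uv \in A(D)$ by construction, so $v$ is dominated; hence $ID$ is an independent dominating set of $D$.

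For part~(3), again fix an independent dominating set $ID$ of $G$. Since $G$ is connected and non-trivial it has an edge, so $V(G)$ is not independent and therefore $ID \subsetneq V(G)$; choose $v \in V(G)\setminus ID$ and let $u_{1},\dots,u_{k}$, with $k \geq 1$ since $ID$ dominates $G$, be its neighbors lying in $ID$. Orient each edge $vu_{i}$ from $v$ towards $u_{i}$ and orient every other edge arbitrarily. In the resulting digraph $D$, the vertex $v$ has no in-neighbor in $ID$: its only neighbors in $ID$ are $u_{1},\dots,u_{k}$, and each corresponding arc is directed out of $v$. Thus $ID$ fails to dominate $D$, and since independence is unaffected by orientation this already shows that $ID$ is not an independent dominating set of $D$.

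I do not anticipate a serious obstacle, as the constructions are short and the verifications routine; the points that need care are conceptual rather than computational. The first is fixing the right reading of the quantifiers in parts~(2) and~(3), namely that the orientation is chosen after the independent dominating set. The second is checking, in part~(3), that the chosen $v$ genuinely cannot be dominated in $D$ by any other means --- which holds because domination of $v$ in a digraph means exactly the existence of an in-arc from $ID$, nothing weaker --- while observing that the non-triviality of $G$ is used precisely to guarantee the existence of a vertex $v$ outside $ID$.
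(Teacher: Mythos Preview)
Your proposal is correct and follows essentially the same approach as the paper: independence is orientation-invariant, part~(2) orients edges between $ID$ and its complement outward from $ID$, and part~(3) orients such edges inward toward $ID$. The only cosmetic difference is that in part~(3) the paper reverses all edges between $ID$ and $V(G)\setminus ID$, whereas you more economically reverse only those incident with a single chosen $v\notin ID$; both constructions work for the same reason, and your explicit discussion of the quantifier reading and of why non-triviality is needed is a welcome clarification.
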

\begin{proof}
1: $ID$ dominates $G$ since any arc $xy\in A(D)$ is also an edge $xy\in E(G)$. $ID$ is independent in $G$ since any edge $xy\in E(G)$ that would cause $ID$ to not be independent in $G$ exists as either the arc $xy$ or the arc $yx$ in $A(D)$ which would contradict that $ID$ is an independent set in $D$.

2: Let $ID$ be an independent dominating set of some graph $G$. Orient all edges incident with vertices in $ID$ away from $ID$, towards $V(G)\setminus ID$.

3: Since $G$ is non-trivial, every independent dominating set must be a proper subset of the vertex set, else it is not independent. let $ID$ be the independent set of $G$ in question. Let $D$ be an orientation of $G$ formed by orienting all arcs from $V(G)\setminus ID$ to $ID$. The set $ID$ does not dominate $V(G)\setminus ID$ in $D$ and is therefore not an independent dominating set in the orientation $D$.
\end{proof}

To help illustrate this result, the following existence and uniqueness theorem for independent dominating sets of tournaments provides an easy opportunity to visualize each of the above claims since any independent dominating set of $K_{n}$ is a single vertex and every vertex represents an independent dominating set of $K_{n}$.

\begin{theorem}\label{tourns}
Let $T_{n}$ be a tournament. $T_{n}$ has an independent dominating set $ID\ \iff\ \exists\ v\in V(T_{n})$ such that $vu\in A(T_{n})\ \forall\ u\in V(T_{n})$. Furthermore, any independent dominating set of a tournament must be unique.
\end{theorem}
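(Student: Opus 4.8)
The key observation is that in a tournament, an independent set can contain at most one vertex: any two vertices are joined by an arc (in one direction or the other), so two distinct vertices can never both lie in an independent set. Hence every independent dominating set of $T_n$ is a singleton $\{v\}$, which immediately yields the uniqueness claim once existence is pinned down, and also tells us exactly what the domination condition must say.

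For the equivalence, I would argue both directions directly. ($\Leftarrow$) Suppose there is a vertex $v$ with $vu \in A(T_n)$ for all $u \in V(T_n) \setminus \{v\}$. Then $N^{+}(v) = V(T_n) \setminus \{v\}$, so $\{v\}$ dominates $T_n$; and $\{v\}$ is trivially independent since it is a single vertex. Hence $\{v\}$ is an independent dominating set. ($\Rightarrow$) Suppose $ID$ is an independent dominating set of $T_n$. By the opening observation, $ID = \{v\}$ for some vertex $v$. Since $\{v\}$ dominates $T_n$, every vertex $u \neq v$ must be dominated by $v$, i.e., $vu \in A(T_n)$. (Here one should be careful about the convention for domination in digraphs used implicitly in the paper: a vertex $v$ dominates $u$ when $vu \in A(D)$, i.e., domination follows the direction of the arcs; this is consistent with part~2 of the previous theorem, where edges are oriented \emph{away} from $ID$.) Thus $v$ is the desired vertex.

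For uniqueness: if $\{v\}$ and $\{w\}$ are both independent dominating sets with $v \neq w$, then by the equivalence just proved both $vu \in A(T_n)$ for all $u$ and $wu \in A(T_n)$ for all $u$; applying the first with $u = w$ gives $vw \in A(T_n)$ and the second with $u = v$ gives $wv \in A(T_n)$, contradicting that $T_n$ is a simple digraph (an orientation of $K_n$ has exactly one of the two arcs between any pair). Hence the independent dominating set, when it exists, is unique. (Equivalently: such a vertex $v$ is a source dominating everything, and a tournament can have at most one vertex of out-degree $n-1$.)

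The main obstacle here is not technical depth but pinning down the domination convention for digraphs, since the paper does not restate it explicitly in this section; once we adopt the convention that $ID$ dominates $D$ when $ID \cup N^{+}(ID) = V(D)$ (the reading forced by part~2 of Theorem~1), every step above is routine. A secondary point worth stating cleanly is the "at most one vertex in an independent set" fact, as it is the engine behind both the characterization and the uniqueness.
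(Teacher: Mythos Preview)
Your proposal is correct and follows essentially the same approach as the paper: both hinge on the observation that any independent set in a tournament is a singleton, reduce the characterization to ``$\{v\}$ dominates $T_n$ iff $vu\in A(T_n)$ for all $u\neq v$,'' and derive uniqueness from the fact that such a $v$ cannot be dominated by any other vertex (your contrapositive phrasing via $vw,wv\in A(T_n)$ is equivalent to the paper's ``no other vertex dominates $v$'').
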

\begin{proof}
($\impliedby$) If $vu\in A(T_{n})\ \forall\ u\in V(T_{n})\setminus\{v\}$, then we may take $ID=\{v\}$ as our independent dominating set of $T_{n}$.

($\implies$) Since $T_{n}$ is a tournament, every maximal independent set consists of a single vertex. Since $T_{n}$ admits an independent dominating set $ID$, it follows that $ID=\{v\}$ for some $v\in V(T_{n})$ which in turn implies that $vu\in A(T_{n})\ \forall\ u\in V(T_{n})\setminus\{v\}$.

(Uniqueness) Since an independent dominating set of a tournament consists of a single vertex $v$ which dominates all other vertices, no other vertex dominates $v$, hence $ID=\{v\}$ is the unique independent dominating set.
\end{proof}

The remainder of this section is dedicated to establishing existence and uniqueness theorems for several other families of digraphs. Each family considered will be given its own subsection. We begin this analysis with orientations of paths.

\subsection{Paths}
Recall our use of the term ``directed path" to refer to the path $P_{n}=v_{1}v_{2}\dots v_{n}$ which has the arc set $\{v_{i}v_{i+1}\ |\ 1\leq i<n\}$, and that we use the phrase orientations of paths as a means to address all orientations of one or more paths.

\begin{figure}[h!]
\centering
\label{directedpathfigure}
\begin{tikzpicture}[-,>=stealth',shorten >=1pt,auto,node distance=2cm,
                    thick,main node/.style={circle,draw}]
  \node[main node] (A)					      {$v_{1}$};
  \node[main node] (B) [right of=A]		      {$v_{2}$};
  \node[main node] (C) [right of=B] 	      {$v_{3}$};
  \node[main node] (D) [right of=C]      	  {$v_{4}$};
  \node[main node] (E) [right of=D]           {$v_{5}$};
  \draw[thick,->]  (A) to [bend left]         (B);
  \draw[thick,->]  (B) to [bend right]        (C);
  \draw[thick,->]  (C) to [bend left]         (D);
  \draw[thick,->]  (D) to [bend right]        (E);
\end{tikzpicture}
\caption{An example of the directed path of length five.}
\end{figure}
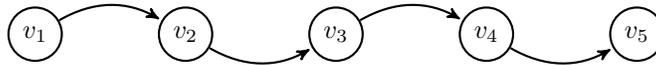

We begin by providing an existence and uniqueness result specifically for the directed path.

\begin{lemma}\label{dipath}
Let $P_{n}=v_{1}v_{2}\dots v_{n}$ be a directed path. Then the set $ID=\{v_{i}\ |\ i\equiv 1\ (\mathrm{mod}\ 2)\}$ is the unique independent dominating set of $P_{n}$.
\end{lemma}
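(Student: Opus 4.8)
The plan is to prove this by first verifying that the proposed set $ID = \{v_i \mid i \equiv 1 \pmod 2\}$ is indeed an independent dominating set of the directed path $P_n$, and then establishing uniqueness separately. For the independence part, I would observe that the only arcs in $P_n$ are of the form $v_i v_{i+1}$, and no two vertices with odd indices differ by exactly one, so $ID$ contains no arc in either direction and is therefore independent. For domination, recall that in a digraph a vertex $v$ is dominated by $ID$ if $v \in ID$ or there is some $u \in ID$ with $uv \in A(D)$. Every odd-indexed vertex lies in $ID$ trivially, and every even-indexed vertex $v_{2k}$ receives the arc $v_{2k-1} v_{2k}$ from the odd-indexed vertex $v_{2k-1} \in ID$, so $ID$ dominates all of $V(P_n)$. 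This handles existence.

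For uniqueness, the plan is to argue by induction on the structure of the path, starting from the source vertex $v_1$. The key observation is that $v_1$ has in-degree zero: no arc points into $v_1$, so the only way for any independent dominating set to dominate $v_1$ is to include $v_1$ itself. Hence $v_1$ belongs to every independent dominating set. Once $v_1$ is forced in, the arc $v_1 v_2$ forbids $v_2$ from the set by independence; but then $v_2$ must be dominated by some in-neighbor in the set, and its only in-neighbor is $v_1$, which is already in — so $v_2$ is fine, and nothing forces $v_3$ yet. However, $v_3$'s only in-neighbor is $v_2$, which cannot be in the set, so $v_3$ must be included to be dominated. Iterating this forcing argument — odd-indexed vertices are forced in because their unique in-neighbor (an even-indexed vertex) is excluded, and even-indexed vertices are excluded because their unique in-neighbor (the preceding odd-indexed vertex) is in — pins down the set completely as $\{v_i \mid i \equiv 1 \pmod 2\}$.

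To make the induction clean, I would phrase it as a claim: for each $j$, $v_j \in ID$ if $j$ is odd and $v_j \notin ID$ if $j$ is even, proved by strong induction on $j$. The base case $j=1$ is the in-degree-zero argument. For the inductive step with $j$ even, the arc $v_{j-1} v_j$ together with $v_{j-1} \in ID$ (inductive hypothesis, $j-1$ odd) forces $v_j \notin ID$ by independence. For $j$ odd and $j > 1$, the vertex $v_j$ has in-neighborhood $N^-(v_j) = \{v_{j-1}\}$ (and out-neighborhood containing $v_{j+1}$ if $j < n$), and since $v_{j-1} \notin ID$ by the inductive hypothesis ($j-1$ even), the only way for $ID$ to dominate $v_j$ is $v_j \in ID$.

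The main obstacle, and really the only subtle point, is being careful about the domination convention in digraphs: a set $ID$ dominates $v$ precisely when $v \in ID$ or $ID \cap N^-(v) \neq \emptyset$, i.e., domination flows along arc directions. One must not accidentally use out-neighbors when checking whether an external vertex dominates $v$. Everything else — independence, the base case, and the alternating forcing — is routine once this convention is pinned down. I would also note explicitly at the start that since $P_n$ is connected and $n \geq 1$, the argument covers the trivial path $P_1$ as a degenerate case where $ID = \{v_1\}$.
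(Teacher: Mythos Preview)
Your proposal is correct and follows essentially the same approach as the paper: both verify that the odd-indexed vertices form an independent dominating set, then establish uniqueness by noting that $v_1$ is forced in (in-degree zero) and propagating this constraint along the path. The paper phrases the propagation as the observation that no two consecutive vertices can both lie outside an independent dominating set (together with independence this forces alternation), whereas you organize it as an explicit induction on the index $j$; the underlying forcing argument is the same, and your version is, if anything, more carefully spelled out with respect to the in-neighbor domination convention.
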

\begin{proof}
By definition of $P_{n}=v_{1}v_{2}\dots v_{n}$ and since $v_{i}$ dominates $v_{i+1}$, the set $ID=\{v_{i}\ |\ i\equiv 1\ (\mathrm{mod}\ 2)\}$ is an independent dominating set of $P_{n}$. To show that $ID$ is unique, it suffices to observe that the vertex $v_{1}$ must be in every independent dominating set of $P_{n}$ and to show that for every independent dominating set $ID$ of $P_{n}$, there can be no two consecutive vertices of $P_{n}$ that do not belong to $ID$. This second claim holds since, if neither $v_{i}$ nor $v_{i+1}$ belong to a particular independent dominating set $ID$, then the vertex $v_{i+1}$ is neither dominated by any member of $ID$, hence $ID$ is not actually a dominating set of $P_{n}$.
\end{proof}

To determine if every oriented path admits at least one independent dominating set, it would be useful to know if any vertices have to be a member of \emph{every} independent dominating set of an orientation of a path. The following lemma proves that vertices of a path which have in-degree zero necessarily must be a part of every independent dominating set of that oriented path. In fact, we can generalize this claim to cover all digraphs.

\begin{lemma}\label{indegzero}
Let $D$ be a digraph. Every vertex $v\in V(D)$ with $d^{-}(v)=0$ must be in every independent dominating set of $D$.
\end{lemma}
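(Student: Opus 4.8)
The plan is to argue by contradiction using the two defining properties of an independent dominating set. Suppose $v \in V(D)$ has $d^{-}(v) = 0$, and suppose $ID$ is an independent dominating set of $D$ that does not contain $v$. Since $ID$ is a dominating set, $v$ must be dominated by some vertex of $ID$; that is, there must exist $u \in ID$ with $uv \in A(D)$. But such an arc $uv$ contributes to the in-degree of $v$, so $d^{-}(v) \geq 1$, contradicting the hypothesis $d^{-}(v) = 0$. Hence $v$ must belong to $ID$, and since $ID$ was arbitrary, $v$ lies in every independent dominating set of $D$.

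The key steps, in order, are: (1) fix an arbitrary independent dominating set $ID$ of $D$ and assume for contradiction $v \notin ID$; (2) invoke the domination property to produce an in-neighbor of $v$ lying in $ID$; (3) observe that this in-neighbor witnesses $d^{-}(v) \geq 1$, contradicting $d^{-}(v) = 0$; (4) conclude $v \in ID$ and note that $ID$ was arbitrary. One subtle point worth addressing explicitly is the distinction between the \emph{closed} and \emph{open} out-neighborhood in the definition of domination: a vertex is considered dominated if it is in the set or is an out-neighbor of a member of the set, so the only way a vertex \emph{not} in $ID$ can be dominated is to have an in-arc from $ID$. This is where the hypothesis $d^{-}(v) = 0$ does its work.

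Honestly, there is no real obstacle here; the statement is essentially immediate once the definitions are unwound, and the only thing to be careful about is making sure the orientation of the dominating arc is correct (a member $u$ of $ID$ dominates $v$ via the arc $uv$, not $vu$). This lemma is the digraph analogue of the familiar fact that every isolated vertex of a graph must lie in every dominating set, with ``isolated'' replaced by ``no incoming arcs.'' It generalizes the observation in the proof of Lemma~\ref{dipath} that $v_{1}$ lies in every independent dominating set of a directed path, since $v_{1}$ is precisely the in-degree-zero vertex of $P_{n}$.
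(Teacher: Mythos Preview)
Your proof is correct and follows essentially the same approach as the paper: the paper simply observes that since no vertex dominates $v$, $v$ must lie in every independent dominating set. Your argument is just a more explicit unwinding of this one-line observation via contradiction, with some additional commentary, but the underlying idea is identical.
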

\begin{proof}
Since no vertex dominates $v$, it immediately follows that $v$ must be in every independent dominating set of $D$.
\end{proof}

With these initial results in place, we can now prove an existence theorem which demonstrates that all oriented paths admit an independent dominating set.

\begin{theorem}\label{pathstheorem}
Every oriented path admits an independent dominating set.
\end{theorem}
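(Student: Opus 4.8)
The plan is to proceed by structural induction on the orientation of a path, using the earlier lemmas as the base machinery. First I would partition the oriented path $P_n = v_1 v_2 \dots v_n$ (as an underlying graph, with some fixed orientation) into maximal ``directed segments''—maximal subpaths whose arcs all point in the same direction along the path. The breakpoints between consecutive segments are exactly the vertices that are either \emph{sources} (in-degree $0$, both incident arcs pointing out) or \emph{sinks} (out-degree $0$, both incident arcs pointing in) within the path. By Lemma \ref{indegzero}, every source must lie in any independent dominating set, so these vertices are forced; the strategy is to build a set $ID$ greedily starting from each source and propagating along the outward-directed segments, then checking what happens at the sinks.

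Concretely, I would argue as follows. Consider a maximal directed segment $v_i v_{i+1} \dots v_j$ (all arcs oriented left-to-right along this stretch), where $v_i$ is either $v_1$ or a source. This segment behaves exactly like a directed path in the sense of Lemma \ref{dipath}: putting $v_i, v_{i+2}, v_{i+4}, \dots$ into $ID$ dominates every vertex of the segment except possibly we need to be careful at the right end $v_j$. If $v_j$ is a sink (arcs coming in from both sides), then $v_j$ is dominated by whichever of $v_{j-1}$ is in $ID$, or if not, it is the endpoint of the \emph{next} segment too and gets handled there; in any case the key point is that a sink is dominated from at least one side since both its incident arcs point toward it. The only genuine worry is a vertex that is simultaneously the ``far end'' of two segments meeting at a sink and also needs to not create an edge inside $ID$—but since we only ever add vertices at even offsets from sources, and sinks are always at the boundary, I can show no two chosen vertices are adjacent.

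The cleanest way to make this rigorous is probably the following explicit construction rather than induction: let $S$ be the set of sources of $D$ (nonempty since a finite DAG-like orientation of a path always has a source; more carefully, $v_1$ is a source unless the arc $v_1v_2$ points into $v_1$, and an orientation of a path always has at least one vertex of in-degree zero). Starting from each source, walk outward along each maximal outward segment and select every other vertex. Then show (i) the resulting set is independent, because within a segment the chosen vertices are at distance $\geq 2$, and two segments only share a common vertex at a sink, which by the parity of the construction is never selected from either side when it would conflict—here one checks the two or three small cases at a sink explicitly; and (ii) the set dominates, because every vertex is within one step (with the correct arc direction) of a chosen vertex: interior vertices of a segment by Lemma \ref{dipath}'s argument, and sinks because their incident arcs both point inward so they are dominated from the adjacent segment.

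The main obstacle I anticipate is the bookkeeping at the sinks, i.e., at vertices where two oppositely-oriented segments meet. There the parity ``resets'', and one must verify both that the sink itself is dominated (easy: both incident arcs point in) and—more delicately—that the last selected vertex of the left segment and the first selected vertex of the right segment are not adjacent, which requires tracking whether the sink vertex is at an even or odd offset within each of its two flanking segments. A clean way to sidestep part of this is to observe that a sink is \emph{automatically} dominated regardless of how we choose within its neighboring segments, so we have freedom to adjust the parity on one side if a conflict would arise; formalizing ``we have enough freedom'' is the crux, and I would handle it by noting that within a directed segment $v_i \dots v_j$ one is forced to include $v_i$ (if it is a source) but the choice of which other vertices to include can always be taken as ``every other vertex starting from $v_i$,'' and this choice never places a selected vertex at the terminal vertex $v_j$ of the segment when $v_j$ is a sink unless the segment has odd length, in which case $v_j$ being a sink means the next segment starts its own forced/greedy selection at a vertex at distance $\geq 2$ along the underlying path—so adjacency inside $ID$ is avoided. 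Packaging this into a short argument, possibly with a one-line case analysis at each sink, should complete the proof.
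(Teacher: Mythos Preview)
Your segment-decomposition strategy is a natural alternative to the paper's vertex-by-vertex induction, but the construction you describe does not always produce an independent set, and your proposed fix at the sinks does not work as stated.

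Concretely: take the orientation $v_1 \to v_2 \to v_3 \leftarrow v_4 \leftarrow v_5 \leftarrow v_6$. The sources are $v_1$ and $v_6$; the sole internal sink is $v_3$. Your greedy rule (``from each source, select every other vertex along each outgoing segment'') picks $\{v_1,v_3\}$ from the left segment and $\{v_6,v_4\}$ from the right, giving $\{v_1,v_3,v_4,v_6\}$, which is \emph{not} independent since $v_3$ and $v_4$ are adjacent. The unique independent dominating set here is $\{v_1,v_4,v_6\}$. The same failure already occurs on four vertices: $v_1 \to v_2 \to v_3 \leftarrow v_4$ forces both $v_1$ and $v_4$ into any $ID$, and your rule also adds $v_3$, again breaking independence.

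The reason your ``freedom to adjust the parity on one side'' does not exist is that within a directed segment from a source $v_i$ to a sink $v_j$ the selection is \emph{completely} forced except possibly at $v_j$ itself: $v_i$ is forced by Lemma~\ref{indegzero}, then $v_{i+1}$ is excluded by independence, then $v_{i+2}$ has only $v_{i+1}$ as an in-neighbour and so is forced in, and so on. The only genuine degree of freedom is whether to include the sink, and the correct choice depends on the parities of \emph{both} flanking segments simultaneously: one must include $v_j$ exactly when both segments have even edge-length, and exclude it otherwise. Your claim that ``the next segment starts its own forced/greedy selection at a vertex at distance $\ge 2$'' is simply false when the neighbouring segment has length~$1$ (its source is then adjacent to the sink), and more generally whenever the two segment lengths have opposite parities the naive rule selects two adjacent vertices. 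The approach can be repaired with this sink rule, but that is not the argument you gave. The paper avoids this difficulty altogether by inducting on $n$ and doing a case analysis on the last one or two arcs, which is messier but sidesteps the global parity bookkeeping.
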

\begin{proof}
The proof is by induction with basis $P_{1}$. Let $P_{n}=v_{1}v_{2}\dots v_{n}$ and let $P_{n-1}=v_{1}v_{2}\dots v_{n-1}$ be the subpath of $P$ comprised of the first $n-1$ vertices and first $n-2$ arcs of $P_{n}$. Since $P_{n-1}$ is smaller than $P_{n}$, it follows that $P_{n-1}$ admits an independent dominating set, call it $ID^{\prime}$.

First consider the case where $v_{n-1}v_{n}\in A(P_{n})$. If $v_{n-1}\in ID^{\prime}$ then $ID^{\prime}\cup N(ID^{\prime}) = V(P_{n})$ and if $v_{n-1}\not\in ID^{\prime}$, then we may take $ID = ID^{\prime}\cup\{v_{n}\}$ and obtain an independent dominating set of $P$. In either case, we have established an independent dominating set of $P_{n}$.

Next, consider the case where $v_{n}v_{n-1}\in A(P_{n})$. If $v_{n-1}\not\in ID^{\prime}$, then we may take $ID=ID^{\prime}\cup\{v_{n}\}$ as our independent dominating set of $P_{n}$, so we may assume that $v_{n-1}\in ID^{\prime}$ for every independent dominating set $ID^{\prime}$ of $P_{n-1}$. Now if $v_{n-2}v_{n-1}\in A(P_{n-1})$, then it must be the case that $v_{n-3}v_{n-2}\in A(P_{n-1})$, else $v_{n-2}$ is neither dominated by a vertex in $ID^{\prime}$ nor in $ID^{\prime}$ itself, contradicting the fact that $ID^{\prime}$ is an independent dominating set of $P_{n-1}$. Therefore we may take $ID=[ID^{\prime}\setminus\{v_{n-1}\}]\cup\{v_{n}\}$ as our independent dominating set of $P_{n}$. This leaves only the case in which $v_{n-1}v_{n-2}\in A(P_{n-1})$.

Beginning with $v_{n}$ and proceeding backwards through $P_{n}$, every vertex must have out-degree one until we reach some vertex $v_{i}$ which has out-degree zero. Per Lemma \ref{dipath}, the directed path from $v_{n}$ to $v_{i}$ only admits dominating sets consisting of vertices with indices of the same parity (mod 2). If the vertex $v_{i-1}$ is a member of $ID^{\prime}$, then we can consider the directed path from $v_{n}$ to $v_{i+1}$, call it $P^{\star}$ and replace $P^{\star}\cap ID^{\prime}$ with the symmetric difference $P^{\star} \Delta [P^{\star}\cap ID^{\prime}]$. By denoting $S=P^{\star}\cap ID^{\prime}$ and $S^{\star}=P^{\star} \Delta [P^{\star}\cap ID^{\prime}]$, we arrive at the independent dominating set $ID=[ID^{\prime}\setminus s]\cup S^{\star}$ of $P_{n}$. Finally, if $v_{i-1}$ is not a member of $ID^{\prime}$, we may set $P^{\star}=v_{n}\dots v_{i}$ and use the same argument as before where $ID=[ID^{\prime}\setminus s]\cup S^{\star}$ (the difference is that $v_{i}$ is also a member of $ID$ in this case). With all cases covered, the inductive step is complete and we conclude that every oriented path admits an independent dominating set.
\end{proof}

\subsection{Trees}
We next turn our attention to orientations of trees by proving an existence theorem for all orientations of trees.

\begin{theorem}
Every oriented tree admits an independent dominating set,
\end{theorem}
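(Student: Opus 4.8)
The plan is to mimic the inductive strategy used for oriented paths (Theorem~\ref{pathstheorem}), but to induct on the number of vertices of the tree and to peel off a leaf rather than an endpoint of a path. Let $T$ be an oriented tree on $n$ vertices with underlying tree $G$, and assume every oriented tree on fewer than $n$ vertices admits an independent dominating set. Since $G$ is a non-trivial tree it has a leaf $v$; let $u$ be its unique neighbour, and let $T'$ be the oriented tree obtained by deleting $v$ (and the single arc between $u$ and $v$). By the inductive hypothesis $T'$ has an independent dominating set $ID'$.

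The bulk of the argument is then a case analysis on (i) the orientation of the edge $uv$ and (ii) whether $u\in ID'$. If the arc is $uv$ (so $v$ has in-degree~$0$ in $T$), then by Lemma~\ref{indegzero} $v$ must lie in every independent dominating set, so the natural move is: if $u\notin ID'$ take $ID=ID'\cup\{v\}$ (independence is fine since $v$'s only neighbour $u$ is outside $ID'$, and $v$ is dominated by itself — vacuously, since nothing dominates it); if $u\in ID'$ we have a conflict, since we need $v\in ID$ but $uv\in A(T)$ forbids both $u$ and $v$ from being in an independent set together, yet $v$ is forced in. This is the genuine obstacle: we cannot simply drop $u$, because $u$ may be the only dominator of other vertices. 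The resolution is to observe that $v$ being a source means $v$ itself is undominated regardless, so \emph{every} independent dominating set of $T$ contains $v$ and excludes $u$; we must instead build an independent dominating set of $T$ directly rather than by extending $ID'$. Concretely, one argues that $T - N^{+}[v] = T - \{u,v,\dots\}$ — better, one removes $v$ and $u$ together and notes the resulting digraph is a forest of oriented trees, each smaller than $T$; apply the inductive hypothesis to each component to get independent dominating sets $ID_1,\dots,ID_k$, and set $ID = \{v\}\cup ID_1\cup\dots\cup ID_k$. The only thing to check is that no $ID_j$ contains an in-neighbour of $u$ that would need $u$ to dominate it — but $u$ had out-neighbours only in these components, and each such out-neighbour is dominated within its own component; in-neighbours of $u$ are irrelevant to dominating $u$ since $u\notin ID$ and $u$ is dominated by $v$ through the arc $vu$. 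Wait — $u$ must be dominated, and with $v$ a source the arc is $vu$, so $v$ dominates $u$: good.

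If instead the arc is $vu$ (so $v$ has out-degree~$0$), the easy sub-case is $u\in ID'$: then $u$ already dominates $v$, so $ID=ID'$ works, and it remains independent and dominating in $T$ since adding the vertex $v$ back with the single arc $vu$ into $u\in ID$ changes nothing. If $u\notin ID'$, then $u$ is dominated in $T'$ by some in-neighbour $w$ with $wu\in A(T')$, or $u\in ID'$ — the latter is excluded, so such a $w$ exists (or $u$ is itself in $ID'$, already handled); in this case $v$ is a source of the single arc $vu$ and is not dominated by $u$ (wrong direction) nor by $w$, so we must put $v$ into the set: take $ID=ID'\cup\{v\}$, which is independent because $v$'s only neighbour is $u\notin ID'$.

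The step I expect to be the main obstacle is precisely the case where $uv\in A(T)$ (leaf $v$ is a source) and $u$ is forced to be in $ID'$ for every independent dominating set of $T'$: there one cannot extend, and must instead recurse on the forest $T-\{u,v\}$ and reassemble, carefully verifying that the union of component-wise independent dominating sets together with $v$ is still \emph{independent} across the cut edges (all such cut edges were incident to $u$, which is now deleted, so this is clean) and still \emph{dominating} (every deleted neighbour of $u$ other than $v$ lies in some component and is dominated there; $u$ is dominated by $v$; $v$ is in the set). A secondary subtlety, shared with the path proof, is that in the source-leaf case one may need to locally flip the choice of independent dominating set on a maximal directed subpath leading into $u$, using Lemma~\ref{dipath} and a symmetric-difference swap exactly as in Theorem~\ref{pathstheorem}; I would handle this by the same $S\mapsto P^{\star}\Delta S$ device once the relevant maximal out-directed path is identified.
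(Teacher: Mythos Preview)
Your plan is workable in substance, but it is marred by a persistent confusion in arc notation: you write ``the arc is $uv$ (so $v$ has in-degree~$0$)'' and ``the arc is $vu$ (so $v$ has out-degree~$0$)'', both of which are backwards in the paper's convention (where $uv$ denotes the arc from $u$ to $v$); you then self-correct mid-case (``with $v$ a source the arc is $vu$''), so the argument is internally inconsistent as written. Once this is repaired, your case analysis does go through, and the ``hard'' sub-case (source leaf with $u$ forced into every $ID'$) is correctly resolved by deleting $\{u,v\}$ and recursing on the components of the resulting forest.

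The paper's proof, however, bypasses your leaf-based case analysis entirely by choosing a better vertex to delete. Every oriented tree is acyclic as a digraph and therefore has a vertex $v$ of in-degree zero (not necessarily a leaf). One removes the entire closed out-neighbourhood $N^{+}[v]$; the remainder is a smaller oriented forest with an independent dominating set $ID$, and $ID\cup\{v\}$ is an independent dominating set of $T$: $v$ dominates $N^{+}(v)$, $ID$ dominates the rest, and independence holds because every neighbour of $v$ was removed. This is precisely your ``hard sub-case'' promoted to the \emph{only} case. Your sink-leaf analysis, the worry about whether $u$ can be dropped from $ID'$, and the proposed symmetric-difference path-flip in the style of Theorem~\ref{pathstheorem} are all unnecessary once you delete a source together with its out-neighbours.
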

\begin{proof}
The proof is by contradiction of a minimum counterexample. Let $T$ be a minimum orientation of a tree, with respect to the cardinality of its vertex set, that does not admit an independent dominating set. Since $T$ is an orientation of a tree, it follows that $T$ has at least one vertex with in-degree zero, call it $v$. Create the subgraph $T^{\prime}$ of $T$ as the induced subgraph $T^{\prime}=T[V(T)\setminus N^{+}[v]]$. Since $T^{\prime}$ is smaller than $T$, it follows that $T$ admits an independent dominating set (note that $T^{\prime}$ may actually be a forest). Choose any independent dominating set of $T^{\prime}$, call it $ID$. We can then create an independent set $ID^{\star}=ID\cup\{v\}$ of $T$, contradicting that $T$ is a smallest counterexample to the claim, thereby concluding that no smallest counterexample exists and completing the proof.
\end{proof}

Knowing that all oriented trees admit an independent dominating set, we now show that arborescences, or out-trees, as well as anti-arborescences, or in-trees, have a unique independent dominating set. To do so, we will first present a quick lemma about independent dominating sets and reversal of orientation, similar to the spirit of the main result in \cite{cary2019trees}.

\begin{lemma}\label{reversereverse}
Let $D$ be a digraph which admits at least one independent dominating set, let $D^{-}$ be its reversal, and let $ID$ be an independent dominating set of $D$. Then the set $V(D)\setminus ID$ is a dominating set in $D^{-}$.
\end{lemma}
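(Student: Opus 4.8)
The plan is to show directly that $V(D)\setminus ID$ dominates every vertex of $D^{-}$. Let $w \in V(D)$ be arbitrary; I need to exhibit a vertex of $V(D)\setminus ID$ that either equals $w$ or has an arc to $w$ in $D^{-}$. I would split into two cases according to whether $w \in ID$ or $w \notin ID$.

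First, suppose $w \notin ID$. Then $w$ itself belongs to $V(D)\setminus ID$, so $w$ is trivially dominated by the set $V(D)\setminus ID$ in $D^{-}$ (a vertex dominates itself). Second, suppose $w \in ID$. Since $ID$ is independent in $D$, no neighbor of $w$ lies in $ID$; in particular every in-neighbor and every out-neighbor of $w$ in $D$ lies in $V(D)\setminus ID$. I now want to find at least one such neighbor. If $w$ had no neighbors at all in $D$, then $\{w\}$ would be an isolated vertex and, since $D$ is connected, $D$ would be trivial — but a trivial digraph has $ID = V(D)$ and $V(D)\setminus ID = \emptyset$, which fails to dominate; so I should either restrict to the non-trivial case or simply note that for a non-trivial connected $D$ every vertex has a neighbor. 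Take any neighbor $u$ of $w$ in $D$; then $u \in V(D)\setminus ID$. If the arc between them is $uw \in A(D)$, then in $D^{-}$ we have $wu \in A(D^{-})$, which does not help; but if the arc is $wu \in A(D)$, then $uw \in A(D^{-})$, and $u \in V(D)\setminus ID$ dominates $w$ in $D^{-}$.

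The main obstacle, then, is the case $w \in ID$ in which \emph{every} arc incident to $w$ in $D$ points \emph{into} $w$, i.e. $d^{+}(w) = 0$ in $D$; in $D^{-}$ such a $w$ would be a source with no in-arcs and could only be dominated by itself, which is impossible since $w \in ID$. Here I would use that $ID$ is a \emph{dominating} set of $D$: since $w$'s only incident arcs in $D$ are in-arcs from vertices not in $ID$, and $w \in ID$, the vertex $w$ dominates at least... wait — more carefully, I should observe that in fact such a $w$ cannot exist when $D$ is connected and non-trivial, \emph{unless} I have miscounted, so instead the correct route is: because $ID$ dominates $D$ and $w$ has only in-neighbors (all outside $ID$), those in-neighbors are dominated by $w$ in $D$; this is consistent, so the genuine fix is to note that a vertex of $ID$ with $d^{+}(w)=0$ in $D$ still has $d^{-}(w) \geq 1$ by connectivity, giving an in-neighbor $u \notin ID$ with $uw \in A(D)$, hence $wu \in A(D^{-})$ — so $u$ dominates $w$ in $D^{-}$ after all. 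Thus in every case a vertex of $V(D)\setminus ID$ dominates $w$ in $D^{-}$, completing the argument; the one hypothesis I must lean on is non-triviality of $D$ (so that $V(D)\setminus ID \neq \emptyset$ and every vertex has a neighbor), which the paper's standing assumption of connectedness supplies for all but the trivial graph.
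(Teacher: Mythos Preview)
Your argument contains a genuine error in the critical case, and the error is not repairable because the lemma as stated is false.

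You correctly isolate the dangerous situation: $w\in ID$ with $d^{+}(w)=0$ in $D$. You then take an in-neighbour $u\notin ID$ with $uw\in A(D)$, deduce $wu\in A(D^{-})$, and conclude that ``$u$ dominates $w$ in $D^{-}$ after all.'' This last step is backwards: $wu\in A(D^{-})$ is an arc \emph{from} $w$ \emph{to} $u$, so it is $w$ that dominates $u$ in $D^{-}$, not the reverse. In $D^{-}$ such a vertex $w$ has in-degree zero, so no vertex other than $w$ itself can dominate it there; since $w\in ID$, the set $V(D)\setminus ID$ genuinely fails to dominate $w$.

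A concrete counterexample: let $D$ be the directed path $v_{1}\to v_{2}\to v_{3}$. Its unique independent dominating set is $ID=\{v_{1},v_{3}\}$, so $V(D)\setminus ID=\{v_{2}\}$. In $D^{-}$ the arcs are $v_{3}\to v_{2}\to v_{1}$, and $\{v_{2}\}$ does not dominate $v_{3}$.

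For comparison, the paper's own proof has the analogous gap: it claims that ``$\forall v\in ID,\ \exists u\in V(D)\setminus ID$ such that $uv\in A(D^{-})$'' follows from ``$\forall u\in V(D)\setminus ID,\ \exists v\in ID$ such that $vu\in A(D)$,'' but the quantifier structure does not permit that inference (the latter is $\forall u\,\exists v$, while what is needed is $\forall v\,\exists u$). So your instinct to worry about sinks in $ID$ was exactly right; that case is not a difficulty to be overcome but the place where the statement itself breaks.
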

\begin{proof}
Note that $V(D)\setminus ID$ might not be an independent set. It suffices to show that $\forall v\in ID$, $\exists u\in V(D)\setminus ID$ such that $uv\in A(D^{-}$. This follows directly from the definition of $ID$ being an independent dominating set, in particular that $\forall u\in V(D)\setminus ID$, $\exists v\in ID$ such that $vu\in A(D)$.
\end{proof}

We now show that all arborescences and anti-arborescences admit a unique independent dominating set.

\begin{lemma}\label{arbs}
Let $T$ be either an arborescence or an anti-arborescence. Then $T$ admits a unique independent dominating set.
\end{lemma}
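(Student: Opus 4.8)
The plan is to show, in each of the two cases, that membership in an independent dominating set is \emph{forced} one vertex at a time once the vertices are examined in a suitable order; uniqueness then follows immediately, while existence is already guaranteed by the earlier theorem that every oriented tree admits an independent dominating set (an arborescence and an anti-arborescence are orientations of trees).

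Consider first an arborescence $T$ with root $r$. Then $d^{-}(r)=0$, so by Lemma~\ref{indegzero} the vertex $r$ lies in \emph{every} independent dominating set of $T$. I would then induct on the distance $d(r,v)$. Every vertex $v\neq r$ has a unique in-neighbour, namely its parent $u$, with $d(r,u)=d(r,v)-1$. Given an independent dominating set $ID$: if $u\in ID$ then $v\notin ID$ by independence; if $u\notin ID$ then $v$ has no in-neighbour in $ID$ and so must itself lie in $ID$. Thus the status of each vertex is determined by that of its parent, and by induction $ID=\{v : d(r,v)\text{ even}\}$; in particular $ID$ is unique.

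Now consider an anti-arborescence $T$ with root $r$. Here each $v\neq r$ has a unique out-neighbour (its parent on the path from $v$ to $r$), so the in-neighbours of a vertex are exactly its children. Since an orientation of a tree is acyclic, we may fix a linear order $v_{1},\dots,v_{n}$ of $V(T)$ in which every vertex follows all of its in-neighbours. I would show by induction on $k$ that, for any independent dominating set $ID$, the membership of $v_{k}$ is forced: all in-neighbours of $v_{k}$ occur among $v_{1},\dots,v_{k-1}$, so by the inductive hypothesis it is already decided whether some in-neighbour of $v_{k}$ lies in $ID$; if one does then $v_{k}\notin ID$ by independence, and if none does then $v_{k}$ has no in-neighbour in $ID$ and so must lie in $ID$. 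Hence $ID$ is unique.

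I expect the anti-arborescence case to be the main obstacle, since a vertex there may have many in-neighbours, so one cannot reason ``one parent forces the child'' as in the arborescence case; the correct device is to process the vertices children-before-parents and exploit that the only possible dominators of a vertex are its (already-processed) in-neighbours together with the vertex itself. Both arguments are in fact instances of a single forcing principle for directed acyclic graphs, which foreshadows the later section on DAGs. A last routine point, needed so that ``unique'' is not vacuous, is that the forced assignment really is an independent dominating set; this is either checked directly (it is independent because adjacent vertices receive opposite statuses, and dominating by construction) or simply inherited from the existence theorem for oriented trees cited above.
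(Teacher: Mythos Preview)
Your argument is correct, but it proceeds quite differently from the paper. For arborescences the paper inducts by deleting the closed out-neighbourhood $N^{+}[v]$ of the root and invoking uniqueness on the smaller (possibly disconnected) tree, whereas you argue directly that the parity of the distance to the root determines membership. For anti-arborescences the paper does not run a separate forcing argument at all: it instead appeals to Lemma~\ref{reversereverse}, passing from two hypothetical independent dominating sets of $T$ to two independent dominating sets of the reversal $T^{-}$, which is an arborescence, and derives a contradiction from the arborescence case just proved. Your approach has the advantage of being uniform and self-contained---the same ``process in a topological order; a vertex is in $ID$ iff no in-neighbour is'' works in both cases and, as you observe, extends verbatim to arbitrary DAGs---while the paper's route showcases the reversal lemma and the reduction of one orientation class to the other.
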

\begin{proof}
(Arborescences) The proof is by induction, and our basis is the trivial digraph. Let $T$ be an arborescence with root vertex $v$ with $|V(T)|=n>1$ and assume that $T$ is a minimum counterexample w.r.t $|V(T)|$. From the previous theorem we know that $T$ admits at least one independent dominating set. Consider the induced sub-tree (or possibly sub-forest) $T^{\prime}=T[V(T)\setminus N^{+}[v]]$. Since $T^{\prime}$ is smaller that $T$, $T^{\prime}$ admits a unique independent dominating set, call it $ID^{\prime}$ (technically, if $T$ is $K_{1,n-1}$ then $T^{\prime}$ and $ID^{\prime}$ are both empty, but this is not an issue). Since $v$ has in-degree zero, we know from Lemma \ref{indegzero} that $v$ must be in every independent dominating set of $T$ and also that $N^{+}(v)$ is not in any independent dominating set of $T$. This, along with the topology of $T$, implies that the set $ID=ID^{\prime}\cup\{v\}$ is the unique independent dominating set of $T$.

(Anti-arborescences) Assume that some anti-arborescence $T$ admits multiple independent dominating sets. Consider one independent dominating set of $T$, call it $ID_{1}$. Since $V(T)\setminus ID_{1}$ is independent ($T$ is a tree), it is also an independent dominating set in $T^{-}$, per Lemma \ref{reversereverse}. Next consider a distinct independent dominating set of $T$, call it $ID_{2}$. The set $V(T)\setminus ID_{2}$, which is not the same as the set $V(T)\setminus ID_{1}$, is also an independent dominating set of $T^{-}$. Since we now know that all arborescences admit a unique independent dominating set, we have derived a contradiction and conclude that all anti-arborescences must also admit a unique independent dominating set.
\end{proof}

\subsection{DAGs}
We next turn our attention to DAGs. Using the technique used to prove the uniqueness of independent dominating sets in arborescences, we can extend this result to DAGs simply by noticing that all DAGs have a sink vertex.

\begin{theorem}
Every DAG admits an independent dominating set.
\end{theorem}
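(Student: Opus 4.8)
The plan is to mirror the argument used for arborescences, replacing the ``root with in-degree zero'' with a ``sink with out-degree zero.'' First I would invoke the standard fact that every finite DAG has at least one sink, i.e., a vertex $v$ with $d^{+}(v)=0$; this follows because a longest directed path in a finite acyclic digraph must terminate at such a vertex. Having fixed such a $v$, I would proceed by induction on $|V(D)|$ (or equivalently by minimum counterexample), with the trivial digraph as the base case, where $\{v\}$ is trivially an independent dominating set.

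For the inductive step, let $D$ be a DAG on $n>1$ vertices and let $v$ be a sink of $D$. The natural move is to delete $v$ together with its in-neighborhood: form the induced subdigraph $D^{\prime}=D[V(D)\setminus N^{-}[v]]$. Since $D^{\prime}$ is an induced subdigraph of a DAG it is itself a DAG (acyclicity is inherited by induced subdigraphs), and it is strictly smaller than $D$, so by the induction hypothesis it admits an independent dominating set $ID^{\prime}$ (with the convention that $ID^{\prime}=\emptyset$ if $D^{\prime}$ is empty). I would then claim that $ID = ID^{\prime}\cup\{v\}$ is an independent dominating set of $D$. Independence holds because $v$ is a sink, so $v$ has no out-neighbors, and the vertices of $N^{-}(v)$ have been removed from consideration in $D^{\prime}$, so no arc of $D$ joins $v$ to any vertex of $ID^{\prime}$; and $ID^{\prime}$ is independent in $D^{\prime}$, hence in $D$. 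Domination holds because every vertex of $N^{-}(v)$ is dominated by $v$ via the arc into $v$, and every vertex outside $N^{-}[v]$ lies in $D^{\prime}$ and is therefore dominated by $ID^{\prime}$ (or is in $ID^{\prime}$).

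The subtlety I expect to need care with — the main obstacle — is the direction of domination in digraphs: a vertex $u$ dominates $v$ when the arc $uv$ is present, so a sink $v$ dominates precisely the vertices in $N^{-}(v)$, which is exactly the set one wants to ``cover'' by adding $v$ to the dominating set. This is the mirror image of the arborescence argument (where a source $v$ dominates $N^{+}(v)$), and one must be consistent about which neighborhood is deleted. One should also double-check the edge cases: if $N^{-}[v]=V(D)$ then $D^{\prime}$ is empty and $ID=\{v\}$ works directly; and if $D$ happens to be disconnected after deletion, the inductive hypothesis still applies since ``DAG'' here does not require connectivity for the purposes of this lemma (or, if one insists on connectivity, one applies the hypothesis componentwise, exactly as the tree proof silently does with forests). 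With these points handled, the induction closes and every DAG admits an independent dominating set.
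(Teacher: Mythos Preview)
Your argument contains a genuine error in the direction of domination, and it is precisely at the point you flagged as ``the main obstacle.'' You correctly state the paper's convention: $u$ dominates $v$ when the arc $uv$ is present. But your next clause, ``so a sink $v$ dominates precisely the vertices in $N^{-}(v)$,'' does not follow; in fact it is the reverse. Under this convention a vertex dominates its \emph{out}-neighbors, so a sink $v$ (with $d^{+}(v)=0$) dominates nothing besides itself. The vertices of $N^{-}(v)$ dominate $v$, not the other way around. Consequently, when you form $D'=D[V(D)\setminus N^{-}[v]]$ and set $ID=ID'\cup\{v\}$, the vertices of $N^{-}(v)$ are neither in $ID$ nor dominated by it: $v$ has no arcs into them, and you deleted them from $D'$ so $ID'$ need not cover them either. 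The set $ID$ is independent, but it is not in general dominating.

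Your instinct to mirror the arborescence/tree argument is sound, but the correct mirror uses a \emph{source} (a vertex with $d^{-}(v)=0$, which every finite DAG also has): delete $N^{+}[v]$, apply induction, and add $v$; then $v$ genuinely dominates $N^{+}(v)$ and independence follows since $N^{-}(v)=\emptyset$. The paper's own proof takes a slightly different route: it picks a sink $v$ but deletes only $v$ itself, obtains an independent dominating set $ID$ of $D\setminus\{v\}$, and then observes that either some $u\in ID\cap N^{-}(v)$ already dominates $v$, or else $ID\cup\{v\}$ is independent (no in-neighbor of $v$ lies in $ID$, and $v$ has no out-neighbors) and dominating. Either corrected approach works; yours as written does not.
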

\begin{proof}
Assume this is false. Let $D$ be a smallest possible DAG (w.r.t. $|V(D)|$) with no independent dominating set and let $v$ be a sink of $D$. Form the smaller DAG $D^{\prime}=D\setminus\{v\}$ (note that $D^{\prime}$, while certainly a DAG, might not be connected - this is OK). Since $D^{\prime}$ is smaller than $D$, it follows that $D^{\prime}$ admits some independent dominating set $ID$. If there exists some vertex $u\in ID\cap N^{-}(v)$ then we are done as $v$ is dominated by $ID$ in $D$. Otherwise the set $ID\cup\{v\}$ constitutes and independent dominating set of $D$. Thus no matter what we obtain an independent dominating set of $D$, contradicting $D$ being a minimum counterexample. Therefore no smallest counterexample exists and we conclude that every DAG admits an independent dominating set.
\end{proof}

\subsection{Cycles}
Turning our attention to cycles, we begin with what is perhaps an unexpected result. We have proven the existence of at least one independent dominating set for every member of every family of digraphs studied thus far. The first result in this section provides the first example of any digraph which does not admit a single independent dominating set. In fact, this example of a digraph with no independent dominating set is the only example found over the course of this paper.

\begin{lemma}\label{evencycles}
Let $C_{n}=v_{1}v_{2}\dots v_{n}v_{1}$ be a directed cycle. Then $C_{n}$ admits an independent dominating set if and only if $n\equiv 0\ (\mathrm{mod}\ 2)$. Additionally, $C_{n}$ admits exactly two distinct independent dominating sets when $n\equiv 0\ (\mathrm{mod}\ 2)$.
\end{lemma}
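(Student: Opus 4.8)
The plan is to treat the two directions of the biconditional and the counting claim in sequence, exploiting the rigid local structure of a directed cycle, namely that every vertex $v_i$ has exactly one out-neighbor ($v_{i+1}$, indices mod $n$) and exactly one in-neighbor ($v_{i-1}$). The key observation I would isolate first is a \emph{local forcing lemma}: if $ID$ is any independent dominating set of $C_n$ and $v_i \notin ID$, then since $v_i$ must be dominated by some vertex pointing into it, and its only in-neighbor is $v_{i-1}$, we are forced to have $v_{i-1}\in ID$. Symmetrically, if $v_i\in ID$, then by independence $v_{i+1}\notin ID$ (the arc $v_iv_{i+1}$ would violate independence), and then the forcing lemma applied to $v_{i+1}$ forces $v_{i+2}\in ID$. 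So once we know the membership status of a single vertex, the status of every vertex is determined, and the pattern is strictly alternating: $ID$ must consist of every other vertex around the cycle.

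From this alternating structure the rest follows quickly. First I would argue necessity of $n$ even: an alternating subset of the cyclic sequence $v_1,\dots,v_n,v_1$ can only close up consistently if $n$ is even; if $n$ is odd, tracing the forced alternation around the cycle returns to the starting vertex with the opposite membership status, a contradiction, so no independent dominating set exists. For sufficiency when $n$ is even, I would simply exhibit the two candidate sets $ID^{1}=\{v_i : i \equiv 1 \pmod 2\}$ and $ID^{2}=\{v_i : i\equiv 0\pmod 2\}$ and verify directly that each is independent (no two chosen vertices are consecutive, so no arc joins two of them) and dominating (each unchosen $v_i$ has its in-neighbor $v_{i-1}$ chosen). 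Finally, for the counting claim: by the forcing argument any independent dominating set is completely determined by whether $v_1\in ID$ or $v_1\notin ID$, giving at most two such sets; combined with the fact that both $ID^{1}$ and $ID^{2}$ are genuine independent dominating sets, there are exactly two.

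I expect the main obstacle to be stating the forcing/alternation argument cleanly enough that the "wrap-around" parity obstruction for odd $n$ is rigorous rather than hand-waved. The cleanest route is probably to define, for a putative $ID$, the indicator pattern and show it satisfies $v_i\in ID \iff v_{i+1}\notin ID$ for all $i$ (reading indices mod $n$), then note that composing this equivalence $n$ times around the cycle yields $v_1\in ID \iff v_1\notin ID$ when $n$ is odd. Everything else — independence, domination, the count of two — is then routine verification of the two explicit sets, which I would not belabor. One small care point: the lemma as stated presumes $n\geq 3$ (so that $C_n$ is a simple graph), which should be kept in mind but causes no trouble.
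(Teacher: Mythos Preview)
Your proposal is correct and follows essentially the same approach as the paper: both exhibit the two alternating sets for even $n$, and both rely on the observation that consecutive vertices can neither both lie in $ID$ (independence) nor both lie outside $ID$ (else the latter is undominated), forcing strict alternation. Your version makes the wrap-around parity contradiction for odd $n$ more explicit than the paper, which simply asserts that ``the same argument'' handles that case.
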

\begin{proof}
Let $n\equiv 0\ (\mathrm{mod}\ 2)$. Then each of $\{v_{i}\ |\ i\equiv 0\ (\mathrm{mod}\ 2)\}$ and $\{v_{i}\ |\ i\equiv 1\ (\mathrm{mod}\ 2)\}$ constitute independent dominating sets of $C_{n}$. Clearly $v_{i}$ and $v_{i+1}$ cannot both belong to an independent dominating set since $v_{i}v_{i+1}\in A(C_{n})$, so the only way a third independent dominating set could exist in $C_{n}$ is if neither $v_{i}$ nor $v_{i+1}$ are in some independent dominating set. But in this scenario the vertex $v_{i+1}$ is neither dominated by any vertex in the independent dominating set nor a member of the independent dominating set itself, hence the set is not actually a dominating set. Thus, when $n\equiv 0\ (\mathrm{mod}\ 2)$, there are exactly two distinct independent dominating sets of the directed cycle $C_{n}$.

Using the same argument, it follows that there are no independent dominating sets of the directed cycle $C_{n}$ when $n\equiv 1\ (\mathrm{mod}\ 2)$.
\end{proof}

\begin{corollary}
Not every digraph admits an independent dominating set.
\end{corollary}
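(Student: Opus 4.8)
The plan is simply to exhibit one digraph with no independent dominating set, since the corollary is purely existential. The work has in fact already been done: Lemma~\ref{evencycles} shows that the directed cycle $C_{n}$ admits an independent dominating set if and only if $n\equiv 0\ (\mathrm{mod}\ 2)$, so taking any odd $n$ produces a witness. I would name the smallest such example, the directed triangle $C_{3}=v_{1}v_{2}v_{3}v_{1}$, and conclude immediately from the lemma.

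For completeness I would also sketch the one-line direct argument on $C_{3}$ so that the corollary does not lean on the full strength of Lemma~\ref{evencycles}: every two vertices of $C_{3}$ are joined by an arc, so an independent set contains at most one vertex; a single vertex $v_{i}$ dominates only itself and its unique out-neighbour, leaving one vertex undominated, while the empty set dominates nothing. Hence $C_{3}$ has no independent dominating set.

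There is no real obstacle here — the only decision is which of the odd directed cycles to cite, and $C_{3}$ is the cleanest. The role of this corollary is to record, now that such an example is in hand, that the existence theorems established earlier for tournaments, oriented paths, oriented trees, and DAGs genuinely cannot be extended to arbitrary digraphs.
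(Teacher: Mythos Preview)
Your proposal is correct and matches the paper's own treatment: the corollary is placed immediately after Lemma~\ref{evencycles} and is meant to follow directly from the fact that directed odd cycles admit no independent dominating set. Your extra one-line verification on $C_{3}$ is a harmless addition but not needed.
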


In addition to proving that the directed odd cycle admits no independent dominating set, we have also shown that the directed even cycle admits exactly two distinct independent dominating sets. A far stronger positive existence result would be that all orientations of cycles, aside from the one counterexample presented above, admit independent dominating sets. This is exactly what we prove next.

\begin{theorem}\label{cyclesalmostall}
Let $C_{n}$ be any orientation of a cycle that is not a directed odd cycle. Then $C_{n}$ admits an independent dominating set.
\end{theorem}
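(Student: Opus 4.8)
The plan is to argue by induction on $n$, the length of the cycle, with the directed even cycle handled as a special base case via Lemma \ref{evencycles}. Given an orientation $C_n$ that is not a directed odd cycle, I first dispose of the case where $C_n$ \emph{is} the directed even cycle (covered by Lemma \ref{evencycles}). So I may assume $C_n$ is not a directed cycle at all, which means there is at least one vertex of in-degree zero; equivalently, there is a vertex $v$ with $d^-(v)=0$, hence $d^+(v)=2$, and $v$ has two out-neighbors along the cycle. By Lemma \ref{indegzero}, such a $v$ must lie in every independent dominating set, so I will build one containing $v$.

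The key step is a ``splicing'' construction: delete $N^+[v]$ from $C_n$. Since $v$ is a source on a cycle, removing $v$ together with its (at most two) out-neighbors leaves either a single oriented path or, if some out-neighbor of $v$ coincides with what remains, a smaller structure; in all cases what remains is an oriented path (possibly empty, possibly a single vertex). By Theorem \ref{pathstheorem}, every oriented path admits an independent dominating set $ID'$. I then set $ID = ID' \cup \{v\}$. The set $ID$ is independent in $C_n$ because $v$'s only neighbors in $C_n$ are its out-neighbors, all of which were deleted and hence are not in $ID'$; and no edge among vertices of $ID'$ was created by reattaching $v$. The set $ID$ dominates $C_n$ because $ID'$ dominates the remaining path, $v$ dominates itself and its out-neighbors, so every vertex of $N^+[v]$ is covered by $v$ and every other vertex is covered by $ID'$. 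This handles all orientations that are not directed cycles.

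The main obstacle is making the case analysis on $N^+[v]$ clean: one must be careful when $n$ is small (say $n=3$ or $n=4$) or when the two out-neighbors of $v$ are adjacent to each other on the cycle, so that $C_n[V(C_n)\setminus N^+[v]]$ could be empty or a single vertex; in the empty case $ID=\{v\}$ already works since $v$'s two out-neighbors are its only other vertices, and in the single-vertex case that lone vertex must be added (it has in-degree zero in the remaining path, or is itself isolated). I should also double-check that deleting $N^+[v]$ never disconnects things in a way that breaks the path structure --- since a cycle minus a source and its out-neighbors is a contiguous arc, this is automatic, but stating it explicitly avoids confusion. Once the structure of the leftover graph is pinned down, the verification that $ID$ is independent and dominating is routine and mirrors the arguments already used for trees and DAGs.
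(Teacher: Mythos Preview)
Your argument is correct and in fact cleaner than the paper's own. The paper deletes a single arc from $C_n$ to obtain a path $P_n$, then carries out a somewhat delicate case analysis on the out-degrees of the endpoints $v_1,v_n$ and on whether $v_2$ can be forced into an independent dominating set of the truncated path $P_{n-1}=C_n\setminus\{v_1\}$. You instead locate a source $v$ (which exists in every orientation of a cycle that is not a directed cycle, by the degree count), delete the closed out-neighborhood $N^{+}[v]$, and land directly on a single oriented path on $n-3$ vertices; one application of Theorem~\ref{pathstheorem} followed by adjoining $v$ finishes. This is exactly the template the paper already uses for trees and for DAGs, so reusing it here is natural and sidesteps the endpoint bookkeeping entirely.

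Two minor remarks. First, your opening line promises induction on $n$, but you never invoke an inductive hypothesis on shorter cycles; the reduction is directly to paths via Theorem~\ref{pathstheorem}, so you can simply drop the word ``induction.'' Second, the worry about $C_n\setminus N^{+}[v]$ failing to be a path is unnecessary: a source on a cycle has exactly two out-neighbors, namely its two cycle-neighbors, so the deletion always leaves a single contiguous sub-path (empty when $n=3$, a point when $n=4$), and the small cases fall out automatically.
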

\begin{proof}
Let $C_{n}=v_{1}v_{2}\dots v_{n}v_{1}$ be any orientation of any cycle that is not a directed odd cycle. We proceed by removing an arc from $C_{n}$, without loss of generality call it $v_{n}v_{1}$, and call the resulting path $P_{n}$. Since we have explicitly assumed that $C_{n}$ is not a directed odd cycle, and since we have fully characterized the independent dominating sets of the directed even cycle in Lemma \ref{evencycles}, we may assume that there exists at least one vertex with out-degree zero and one vertex of out-degree two in $C_{n}$.

If $C_{n}$ has the out-degree sequences $\{0,2,0,2,\dots,0,2,0,2\}$, then we may take the set of vertices of out-degree two as our independent dominating set, so let us assume that there exists at least one vertex of out-degree one in $C_{n}$. Without any loss of generality, we may further our assumption by asserting that $d^{+}(v_{n})=2$ and $d^{+}(v_{1})=1$.

Now, since $P_{n}$ is a path, it follow from Theorem \ref{pathstheorem} that $P_{n}$ admits an independent dominating set. Since both $v_{1}$ and $v_{n}$ have in-degree zero in $P_{n}$, we conclude from Lemma \ref{indegzero} that it must be the case that $\{v_{1},v_{n}\}\subseteq ID$ for all independent sets $ID$ of $P_{n}$. Furthermore, we may conclude that there exists some vertex $v_{i}\in V(P_{n})$ such that $d^{+}(v_{i})=0$.

Consider the path $P_{n-1}=C_{n}[V_{C_{n}\setminus\{v_{1}\}}]$. It still holds that $v_{n}$ is a member of every independent dominating set of $P_{n-1}$ since $v_{n}$ has in-degree zero in $P_{n-1}$. If $P_{n-1}$ admits an independent dominating set $ID$ such that $v_{2}\in ID$, then we are done since $ID$ constitutes and independent dominating set of $C_{n}$ since $v_{n}v_{1}\in A(C_{n})$. Thus, we have that there does not exist any independent dominating set of $P_{n-1}$ which contains the vertex $v_{2}$. This implies that $d^{+}_{P_{n-1}}(v_{2})=0$ since if $d^{+}_{P_{n-1}}(v_{2})=1$ would imply that $v_{2}$ would be in some independent dominating set of $P_{n-1}$. Since $d^{+}_{C_{n}}(v_{1})=1$, this in turn implies that $d^{+}_{C_{n}}(v_{2})=0$. This implies that $v_{3}$ must be a member of every dominating set of $P_{n-1}$. Fix some independent dominating set $ID$ of $P_{n-1}$. Since $v_{3}$ dominates $v_{2}$ and since $v_{n}v_{1}\in A(C_{n})$, it follows that $ID$ is an independent dominating set of $C_{n}$ and the proof is complete.
\end{proof}

\begin{corollary}
An orientation of a cycle admits an independent dominating set if and only if it is not a directed odd cycle.
\end{corollary}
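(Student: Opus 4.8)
The plan is to observe that this corollary is simply the conjunction of two results already established, namely Lemma~\ref{evencycles} and Theorem~\ref{cyclesalmostall}, and that essentially no new work is required beyond stitching them together and checking that the two statements partition the space of orientations of cycles correctly.

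First I would handle the ``only if'' direction, in contrapositive form: suppose the orientation in question \emph{is} a directed odd cycle $C_n$ with $n\equiv 1\ (\mathrm{mod}\ 2)$. Then Lemma~\ref{evencycles} states precisely that $C_n$ admits an independent dominating set if and only if $n\equiv 0\ (\mathrm{mod}\ 2)$, so a directed odd cycle admits no independent dominating set. This establishes that an orientation of a cycle admitting an independent dominating set cannot be a directed odd cycle.

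Next I would handle the ``if'' direction: suppose the orientation of the cycle is \emph{not} a directed odd cycle. Then Theorem~\ref{cyclesalmostall} applies verbatim and produces an independent dominating set. Combining the two directions gives the biconditional. The only thing to be careful about is that ``not a directed odd cycle'' is exactly the hypothesis of Theorem~\ref{cyclesalmostall}, and that the directed even cycle case is not being accidentally excluded — but Lemma~\ref{evencycles} already guarantees independent dominating sets there, and Theorem~\ref{cyclesalmostall}'s proof explicitly invokes that lemma, so there is no gap.

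There is no real obstacle here: all of the genuine difficulty was absorbed into the proof of Theorem~\ref{cyclesalmostall} (the case analysis on out-degree sequences) and into Lemma~\ref{evencycles}. The corollary is a one-line consequence, and the write-up should simply cite both results and note that ``is a directed odd cycle'' and ``is not a directed odd cycle'' exhaust and separate the possibilities.
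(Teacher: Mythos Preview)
Your proposal is correct and matches the paper's approach: the paper states this corollary with no proof at all, treating it as immediate from Lemma~\ref{evencycles} (directed odd cycles have no independent dominating set) together with Theorem~\ref{cyclesalmostall} (every other orientation of a cycle does), which is exactly the decomposition you give.
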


\subsection{Bipartite Graphs}
The final family of digraphs we study in this paper is the family of oriented bipartite graphs. Given that bipartite graphs are comprised of two independent sets, they provide a natural place to begin looking for independent dominating sets. We denote bipartite graphs as $D=\{X,Y\}$ where $X$ and $Y$ are the two partite sets of $D$. Interestingly, it is quite easy to show that it is indeed possible for an independent dominating set of an oriented bipartite graph does not necessarily have to be comprised of a single partite set.  In general, this phenomenon is possible so long as there are no arcs from the portion of the independent set in $X$ to the portion of the independent dominating set in $Y$. To see this, consider the following example.

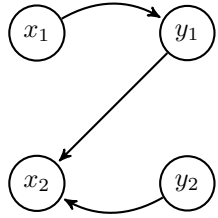
\begin{figure}[h!]
\centering
\label{bipic}
\begin{tikzpicture}[-,>=stealth',shorten >=1pt,auto,node distance=2cm,
                    thick,main node/.style={circle,draw}]
  \node[main node] (A)					      {$x_{1}$};
  \node[main node] (B) [below of=A]		      {$x_{2}$};
  \node[main node] (C) [right of=A] 	      {$y_{1}$};
  \node[main node] (D) [right of=B]      	  {$y_{2}$};
  \draw[thick,->]  (A) to [bend left]         (C);
  \draw[thick,->]  (C) to                     (B);
  \draw[thick,->]  (D) to [bend left]         (B);
\end{tikzpicture}
\caption{An example of an oriented bipartite graph $D=\{X,Y\}$ which admits an independent set comprised of members of both $X$ and $Y$. The set $ID=\{x_{1},y_{2}\}$ comprises such an independent dominating set.}
\end{figure}

The following lemma gives an immediate example of an oriented bipartite graph which does not admit an independent dominating set with vertices in each partite set.

\begin{lemma}
Let $K_{m,n}=\{X,Y\}$ be a directed complete bipartite graph, i.e., $\forall\ (x,y)\in X\times Y$, $xy\in A(K_{m,n})$. Then the set $X$ is the unique independent dominating set of $K_{m,n}$.
\end{lemma}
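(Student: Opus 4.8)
The plan is to show two things: that $X$ is an independent dominating set of $K_{m,n}$, and that it is the only one. The first part is essentially immediate from the structure of the digraph. Since $X$ is a partite set of a bipartite graph, it is an independent set; and since every arc of $K_{m,n}$ runs from $X$ to $Y$, every vertex $y \in Y$ satisfies $xy \in A(K_{m,n})$ for any $x \in X$, so $X$ dominates $Y$. Hence $X \cup N^{+}(X) = X \cup Y = V(K_{m,n})$, and $X$ is an independent dominating set.

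For uniqueness, the cleanest route is to invoke Lemma~\ref{indegzero}: every vertex with in-degree zero must lie in every independent dominating set. Every vertex $x \in X$ has $d^{-}(x) = 0$, because all arcs are oriented out of $X$ and into $Y$, so no arc has its head in $X$. Therefore $X$ is contained in every independent dominating set of $K_{m,n}$. But $X$ is already a maximal independent set: adding any $y \in Y$ would introduce the arc $xy$ for every $x \in X$, destroying independence. So no independent dominating set can properly contain $X$, and combined with the fact that every independent dominating set must contain $X$, we conclude $X$ is the unique independent dominating set.

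I expect essentially no obstacle here; the only thing to be slightly careful about is the degenerate cases where $m = 0$ or the graph is trivial, but since the paper assumes all digraphs are connected and non-trivial, $K_{m,n}$ with $m,n \geq 1$ is the only case, and the argument above covers it uniformly. One could also phrase the uniqueness argument directly without citing Lemma~\ref{indegzero} — if $ID$ is any independent dominating set and some $x \in X$ were omitted, then $x$ would need an in-neighbor in $ID$, but $x$ has no in-neighbors at all, a contradiction — but routing through the already-established lemma keeps the proof short.
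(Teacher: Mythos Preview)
Your proof is correct and follows essentially the same approach as the paper's: both show $X$ is independent and dominates $Y$, then argue that every $x\in X$ has in-degree zero and hence must lie in every independent dominating set, and finally that no $y\in Y$ can be added without violating independence. The only cosmetic difference is that you cite Lemma~\ref{indegzero} explicitly, whereas the paper re-derives that observation inline.
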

\begin{proof}
Since $xy\in A(K_{m,n})\ \forall (x,y)\in X\times Y$, it follows that $X$ dominates $Y$. Thus $X\cup N(X)=X\cup Y=V(K_{m,n})$ and so $X$ is a dominating set of $K_{m,n}$. Since $X$ is an independent set, not only is $X$ an independent dominating set of $K_{m,n}$, but $\forall x\in X\ \nexists\ v\in V(K_{m,n})$ such that $vx\in A(K_{m,n})$. Thus every member of $X$ must be in every independent dominating set of $K_{m,n}$. Since $X\cap ID\neq\emptyset$ for all independent dominating sets $ID$ of $K_{m,n}$, and since $xy\in A(K_{m,n})\ \forall y\in Y$ holds for each $x\in X$, it follows that $Y\cap ID=\emptyset$ for all independent dominating sets $ID$ of $K_{m,n}$. Therefore we conclude that $X$ is the unique independent dominating set of $K_{m,n}$.
\end{proof}

The conditions above, that the oriented bipartite graph be both complete and directed, can both be dropped with the partite $X$ remaining an independent dominating set.

\begin{lemma}
Let $D=\{X,Y\}$ be an orientation of a bipartite graph in which $X$ dominates $Y$, i.e., $D$ has the property that $\forall y\in Y$, $\exists x\in X$ such that $xy\in A(D)$. Then $X$ is an independent dominating set of $D$.
\end{lemma}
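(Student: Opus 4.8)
The plan is to verify directly that $X$ satisfies the two defining conditions of an independent dominating set: independence and domination. Both follow almost immediately from the hypotheses, so the argument will be short.

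First I would establish that $X$ is an independent set of $D$. Since $D$ is an orientation of a bipartite graph with partite sets $X$ and $Y$, the underlying graph $G$ has no edge with both endpoints in $X$; consequently $A(D)$ contains no arc $x_{1}x_{2}$ with $x_{1},x_{2}\in X$, so $X$ is independent in $D$ regardless of how the edges between $X$ and $Y$ are oriented. Next I would check that $X$ dominates $V(D)=X\cup Y$. Every vertex of $X$ lies in $X$ itself, so it is trivially covered; for every $y\in Y$, the hypothesis that $X$ dominates $Y$ gives some $x\in X$ with $xy\in A(D)$, so $y\in N^{+}(x)\subseteq N(X)$. Hence $X\cup N(X)=V(D)$, and combining with independence, $X$ is an independent dominating set of $D$.

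I do not expect a genuine obstacle here; the only points requiring care are bookkeeping ones. One must recall that the domination condition is $D\cup N(D)=V$ (so members of $X$ need not be dominated by other vertices), and one must note that independence of a partite set of an oriented bipartite graph holds for \emph{every} orientation, which is what makes the conclusion orientation-independent apart from the single hypothesis that $X$ dominates $Y$. If desired, I would also remark that this lemma generalizes the two preceding ones, since a directed complete bipartite graph trivially satisfies the hypothesis.
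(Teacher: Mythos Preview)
Your proposal is correct and follows the same approach as the paper: verify domination from the hypothesis and independence from bipartiteness. The paper's proof is simply a terser version of what you wrote, so there is nothing to add.
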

\begin{proof}
By definition of $D$ it follows that $X$ is a dominating set of $D$. Since $D=\{X,Y\}$ is bipartite, $X$ is independent.
\end{proof}

Note that in the more general case above, it is certainly possible that both partite sets may dominate the other, in which case both $X$ and $Y$ are independent dominating sets of $D$. This is quite interesting because the vertex set $V(D)$ is the union of two disjoint independent dominating sets. Characterizing the digraphs whose vertex sets are the union of $k$ disjoint independent dominating sets is an interesting problem which would significantly generalize this result. However, as we have yet to prove that all bipartite graphs admit an independent dominating set, we redirect our attention to this problem and prove a final lemma in preparation for the main existence theorem of this subsection.

\begin{lemma}\label{nozeros}
Let $D=\{X,Y\}$ be an orientation of a bipartite graph such that no vertex has in-degree equal to zero. Then $D$ admits an independent dominating set. Furthermore, any such independent dominating set is not unique.
\end{lemma}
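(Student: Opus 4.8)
The plan is to exploit the two preceding lemmas: I claim that the in-degree hypothesis forces \emph{each} partite set to dominate the other, so that both $X$ and $Y$ are independent dominating sets, which settles existence and non-uniqueness simultaneously. First I would record two harmless preliminaries. Since $D$ is connected, its bipartition $\{X,Y\}$ is unique, so the notation $D=\{X,Y\}$ is unambiguous; and since every vertex has in-degree at least one, $D$ has at least one arc, hence at least two vertices, so $X$ and $Y$ are both nonempty.

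Next I would prove that $X$ dominates $Y$. Fix an arbitrary $y\in Y$. Because $D$ is bipartite we have $N^{-}(y)\subseteq X$, and because $d^{-}(y)\neq 0$ this in-neighborhood is nonempty, so there exists $x\in X$ with $xy\in A(D)$. As $y$ was arbitrary, $X$ dominates $Y$, and the lemma immediately preceding this one then gives that $X$ is an independent dominating set of $D$. Running the identical argument with the roles of $X$ and $Y$ interchanged — using $N^{-}(x)\subseteq Y$ together with $d^{-}(x)\neq 0$ for each $x\in X$ — shows that $Y$ dominates $X$, so $Y$ is likewise an independent dominating set of $D$.

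Finally, for non-uniqueness I would simply observe that $X$ and $Y$ are disjoint and both nonempty, hence distinct, so $D$ admits at least two independent dominating sets (indeed $V(D)$ is the union of two disjoint ones, in the spirit of the remark preceding the lemma). I do not anticipate a genuine obstacle here; the only point requiring care is the degenerate case, but the in-degree hypothesis precisely excludes the trivial digraph and, more generally, any digraph possessing a vertex of in-degree zero, so the argument above applies without exception.
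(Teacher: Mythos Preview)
Your proposal is correct and follows essentially the same approach as the paper: use the positive in-degree hypothesis to show that each partite set dominates the other, so that both $X$ and $Y$ are independent dominating sets, giving existence and non-uniqueness at once. Your write-up is simply more explicit about the preliminaries (nonemptiness of $X$ and $Y$, excluding the trivial digraph) than the paper's terse version.
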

\begin{proof}
Since every vertex has positive in-degree, it follows that $\forall y\in Y$, $\exists x\in X$ such that $xy\in A(D)$. Similarly, it follows that $\forall x\in X$, $\exists y\in Y$ such that $yx\in A(D)$. Thus we may choose either partite set to be our independent dominating set.
\end{proof}

Finally, we are ready to present an existence theorem which states that all bipartite graphs admit at least one independent dominating set.

\begin{theorem}\label{allbis}
Every oriented bipartite graph $D$ admits an independent dominating set.
\end{theorem}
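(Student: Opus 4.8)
The plan is to argue by induction on $|V(D)|$, following the same source-removal strategy used earlier for trees and DAGs, with Lemma~\ref{nozeros} supplying the one case in which there is no source to remove. The fact worth isolating first is that in \emph{any} digraph the set $Z=\{v\in V(D):d^{-}(v)=0\}$ of sources is an independent set: the head of any arc has in-degree at least one, so no two sources can be adjacent. This observation, together with the trivial fact that no arc points into a source, is what makes the whole argument go through cleanly, since it forces $Z$ to be extendable to an independent dominating set without any interference.

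For the induction I would take $D=\{X,Y\}$ to be a smallest counterexample with respect to $|V(D)|$, enlarging the ambient class to allow $D$ to be a (possibly disconnected) oriented bipartite graph; this is harmless, since a disconnected bipartite digraph admits an independent dominating set exactly when each of its components does, and the induced subdigraphs produced below need not remain connected. If every vertex of $D$ has positive in-degree, then $Z=\emptyset$ and Lemma~\ref{nozeros} already yields an independent dominating set, a contradiction; so we may assume $Z\neq\emptyset$.

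Next I would form the induced subdigraph $D'=D[V(D)\setminus N^{+}[Z]]$. Since $Z\neq\emptyset$ and $Z\subseteq N^{+}[Z]$, we have $|V(D')|<|V(D)|$, and $D'$ is again an orientation of a bipartite graph, so by the inductive hypothesis $D'$ has an independent dominating set $ID'$ (with $ID'=\emptyset$ if $D'$ is empty). I then claim that $ID=Z\cup ID'$ is an independent dominating set of $D$. For independence: $Z$ is independent, $ID'$ is independent in $D'$ and hence in $D$ because $D'$ is an induced subdigraph, and there is no arc between $Z$ and $ID'$, since an arc from $Z$ into $ID'$ would place that vertex of $ID'$ in $N^{+}[Z]$ while an arc from $ID'$ into $Z$ would give a source positive in-degree, both impossible. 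For domination: every vertex of $N^{+}[Z]$ lies in $Z$ or is an out-neighbor of a vertex of $Z$, hence is dominated by $Z$, and every vertex of $V(D')$ is dominated by $ID'$ via an arc of $D'$, which is also an arc of $D$. This contradicts the choice of $D$, so no counterexample exists.

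I do not expect a genuinely hard step here; the proof is short, and the only real work is the bookkeeping in the independence check—confirming simultaneously that $Z$ is independent, that nothing points into $Z$, and that $ID'$ avoids $N^{+}[Z]$. The one mild annoyance is stating the inductive claim over disconnected bipartite digraphs, which is necessary because deleting $N^{+}[Z]$ can disconnect $D$ even when $D$ is connected.
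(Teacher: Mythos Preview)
Your proof is correct and follows essentially the same approach as the paper: induction on $|V(D)|$, invoking Lemma~\ref{nozeros} when there are no sources, and otherwise deleting a source together with its out-neighborhood and adjoining the source to an independent dominating set of the remainder. The only differences are cosmetic---you remove all sources $Z$ at once rather than a single source $v$, and you are more explicit than the paper about the need to allow disconnected bipartite digraphs in the inductive hypothesis.
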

\begin{proof}
The proof is by induction on $|V(D)|$, and the basis is trivial. Assume the claim is true for all oriented bipartite graphs on fewer than $n$ vertices, and let $D=\{X,Y\}$ be an orientation of a bipartite graph on $n$ vertices. From Lemma \ref{nozeros} we are done if there are no vertices with in-degree zero, so assume that there is at least one such vertex and call it $v$. Let $S=\{v\}\cup N^{+}(v)$ and create the induced sub-digraph $D^{\prime}=D[V(D)\setminus S]$. Since $|V(D)|<n$, it follows from our inductive hypothesis that $D^{\prime}$ admits an independent dominating set. Choose any independent dominating set of $D^{\prime}$, call it $ID$. Since $N^{+}(v)\not\in V(D^{\prime})$, and since $N^{+}(v)$ is dominated by $v$ in $D$, we may form the independent dominating set $ID^{\prime}=ID\cup\{v\}$ of $D$, and the proof is complete.
\end{proof}

\section{Idomatic Numbers}
The idomatic number of a (di)graph is the maximum number of pairwise disjoint independent dominating sets. Given the several uniqueness results proven in this paper, we can already state some results on the idomatic number of certain digraphs. Before providing specific results on the idomatic number of certain digraphs, we mention that for all families of digraphs studied in this paper, with the lone exception of the directed odd cycle, we know that the idomatic number $id(D)\geq 1$.

\begin{corollary}
If $T$ is a tournament then the idomatic number is given by
\begin{equation*}
id(T)=\begin{cases}
1\quad \mathrm{if}\ \exists\ v\in V(T)\ \mathrm{s.t.}\ vu\in A(T)\ \forall\ u\in V(T)\setminus\{v\}\\
0\quad \mathrm{otherwise}
\end{cases}
\end{equation*}
\end{corollary}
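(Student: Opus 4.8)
The plan is to derive this directly from Theorem~\ref{tourns}, which already does all the heavy lifting; the corollary is essentially a translation of that existence-and-uniqueness result into the language of the idomatic number. First I would split into the two cases appearing in the statement. In the first case, suppose there exists a vertex $v \in V(T)$ with $vu \in A(T)$ for every $u \in V(T) \setminus \{v\}$. Then Theorem~\ref{tourns} guarantees that $T$ has an independent dominating set, so $id(T) \geq 1$. The same theorem asserts that the independent dominating set of a tournament is unique; since any family of pairwise disjoint independent dominating sets can contain at most one copy of that single set, we get $id(T) \leq 1$, and hence $id(T) = 1$.

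In the second case, suppose no such vertex $v$ exists. Then the forward implication of Theorem~\ref{tourns}, read contrapositively, tells us that $T$ admits no independent dominating set at all. Consequently the only family of pairwise disjoint independent dominating sets is the empty family, so $id(T) = 0$, matching the ``otherwise'' branch.

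One minor point I would address explicitly is the degenerate reading of ``pairwise disjoint'': in a tournament every independent dominating set is a singleton, so two distinct ones would be $\{v\}$ and $\{w\}$ with $v \neq w$, which are automatically disjoint but are already ruled out by the uniqueness clause of Theorem~\ref{tourns}. Thus disjointness is never the binding constraint here — there simply is no second independent dominating set to begin with. I do not anticipate any real obstacle; the substantive work (characterizing when a tournament has an independent dominating set, and showing it is unique) was already carried out in Theorem~\ref{tourns}, so this corollary only needs the two short case arguments above.
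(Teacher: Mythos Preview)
Your proposal is correct and follows exactly the paper's approach: the paper's proof is simply ``This follows directly from Theorem~\ref{tourns},'' and you have merely unpacked that one-line justification into the obvious two-case argument. There is nothing to add.
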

\begin{proof}
This follows directly from Theorem \ref{tourns}.
\end{proof}

\begin{corollary}
If $P$ is a directed path then the idomatic number is given by $id(P)=1$.
\end{corollary}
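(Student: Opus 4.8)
The plan is to derive this immediately from Lemma~\ref{dipath}, which already establishes that a directed path $P_n$ has a \emph{unique} independent dominating set, namely $ID = \{v_i \mid i \equiv 1 \ (\mathrm{mod}\ 2)\}$. Since the idomatic number $id(D)$ counts the maximum number of \emph{pairwise disjoint} independent dominating sets, and a family of pairwise disjoint sets of size at least one can contain at most one copy of any particular set, uniqueness of the independent dominating set forces $id(P) \le 1$. On the other hand, the existence of that one independent dominating set gives $id(P) \ge 1$. Combining the two inequalities yields $id(P) = 1$.

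Concretely, I would write: by Lemma~\ref{dipath}, $P$ admits exactly one independent dominating set. Any collection of pairwise disjoint independent dominating sets therefore has at most one member (two distinct members of such a collection would be two distinct independent dominating sets, contradicting uniqueness; note also that the empty set is never a dominating set of a nonempty graph, so we cannot pad the collection). Hence $id(P) \le 1$. Since that unique independent dominating set does exist, the collection consisting of it alone witnesses $id(P) \ge 1$, so $id(P) = 1$.

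I do not anticipate any real obstacle here; the only thing to be slightly careful about is the degenerate reading of ``pairwise disjoint'' — one must observe that you cannot inflate the count by throwing in $\emptyset$, since $\emptyset$ fails to dominate a nonempty vertex set, so every set in the collection is a genuine (nonempty) independent dominating set and uniqueness applies to each. This is the same bookkeeping point implicit in the tournament corollary just above, so it needs at most a parenthetical remark.
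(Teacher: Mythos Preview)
Your proposal is correct and matches the paper's approach exactly: the paper's proof is the single line ``This follows directly from Lemma~\ref{dipath},'' and your argument simply unpacks why uniqueness of the independent dominating set forces $id(P)=1$. Your added remark about the empty set is harmless extra care.
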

\begin{proof}
This follows directly from Lemma \ref{dipath}.
\end{proof}

\begin{corollary}
If $C_{n}$ is a directed cycle then the idomatic number is given by
\begin{equation*}
id(C_{n})=\begin{cases}
0\quad \mathrm{if}\ n\equiv 1\ (\mathrm{mod}\ 2) \\
2\quad \mathrm{if}\ n\equiv 0\ (\mathrm{mod}\ 2) 
\end{cases}
\end{equation*}
\end{corollary}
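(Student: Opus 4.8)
The plan is to reduce everything to Lemma \ref{evencycles}, which already enumerates the independent dominating sets of a directed cycle. First I would split into the two congruence classes. When $n\equiv 1\ (\mathrm{mod}\ 2)$, Lemma \ref{evencycles} tells us that $C_{n}$ has no independent dominating set at all, so the collection of pairwise disjoint independent dominating sets is empty and its maximum size is $0$; hence $id(C_{n})=0$ in this case.

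When $n\equiv 0\ (\mathrm{mod}\ 2)$, the same lemma says that $C_{n}$ has exactly two distinct independent dominating sets, namely $A=\{v_{i}\ |\ i\equiv 0\ (\mathrm{mod}\ 2)\}$ and $B=\{v_{i}\ |\ i\equiv 1\ (\mathrm{mod}\ 2)\}$. I would observe that $A$ and $B$ are disjoint, since they partition $V(C_{n})$ according to the parity of the index, so $\{A,B\}$ is a family of two pairwise disjoint independent dominating sets, giving $id(C_{n})\geq 2$. For the upper bound, since there are only two independent dominating sets of $C_{n}$ in total, no family of pairwise disjoint independent dominating sets can have more than two members, so $id(C_{n})\leq 2$. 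Combining the two inequalities yields $id(C_{n})=2$.

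There is essentially no obstacle here: the entire content is packaged in Lemma \ref{evencycles}, and the only thing to verify by hand is the (immediate) fact that the two index-parity classes are disjoint and that "exactly two distinct independent dominating sets" caps the idomatic number at $2$. The write-up is therefore just a two-case unpacking of the earlier lemma.
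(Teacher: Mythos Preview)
Your proposal is correct and follows exactly the paper's approach: the paper's own proof is the single line ``This follows directly from Lemma \ref{evencycles},'' and you have simply unpacked that reference into the two obvious cases. The only extra content you add---that the two parity classes are disjoint and that having exactly two independent dominating sets caps $id(C_{n})$ at $2$---is immediate, so there is nothing to compare or correct.
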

\begin{proof}
This follows directly from Lemma \ref{evencycles}.
\end{proof}

\begin{corollary}
If $T$ is either an arborescence or an anti-arborescence, then the idomatic number is given by $id(T)=1$.
\end{corollary}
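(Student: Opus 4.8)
The plan is to derive this corollary directly from Lemma \ref{arbs}, exactly as the preceding corollaries are derived from their corresponding uniqueness lemmas. Lemma \ref{arbs} establishes that every arborescence and every anti-arborescence admits a \emph{unique} independent dominating set. The idomatic number $id(T)$ counts the maximum number of pairwise disjoint independent dominating sets; if there is exactly one independent dominating set, then trivially the largest collection of pairwise disjoint ones is that single set, so $id(T)=1$.

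The key steps, in order, are as follows. First, invoke the existence half of Lemma \ref{arbs} (equivalently, the earlier theorem that every oriented tree admits an independent dominating set) to conclude $id(T)\geq 1$. Second, invoke the uniqueness half of Lemma \ref{arbs}: let $ID$ be the unique independent dominating set of $T$. Third, observe that any family of pairwise disjoint independent dominating sets of $T$ can contain at most one member, since every independent dominating set of $T$ equals $ID$ and a family of two or more ``disjoint'' copies of the same nonempty set is impossible. Hence $id(T)\leq 1$. Combining the two inequalities gives $id(T)=1$.

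There is essentially no obstacle here; the statement is an immediate packaging of an already-proved uniqueness result, and the proof is one sentence. The only point worth a moment's care is the edge case where $T$ is trivial (a single vertex): there $ID$ is that vertex, still unique and nonempty, so the argument goes through unchanged. A complete proof would read simply: ``This follows directly from Lemma \ref{arbs}.''
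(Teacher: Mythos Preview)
Your proposal is correct and matches the paper's approach exactly: the paper's proof is the single sentence ``This follows directly from Lemma \ref{arbs}.'' Your additional unpacking of why uniqueness forces $id(T)=1$ is sound and simply makes explicit what the paper leaves implicit.
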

\begin{proof}
This follows directly from Lemma \ref{arbs}.
\end{proof}

\section{Conclusion and Future Directions}
In this paper we began the study of independent dominating sets in digraphs. This is an extremely important first step for the development of routing mechanisms in directed networks, and can have profound applications in control systems engineering and other areas dependent upon directed communications. We achieved this by proving a series of existence and uniqueness results for important families of digraphs including orientations of complete graphs, paths, trees, DAGs, cycles, and bipartite graphs. We then studied the idomatic number which characterizes the number of disjoint independent dominating sets in a (di)graph and provided several results in the form of corollaries to the uniqueness results.

There are several possible directions in which one may further this line of research. Providing general bounds on the idomatic number in digraphs would be useful, as would characterizing those graphs with fixed idomatic numbers. In particular, characterizing all digraphs which have an idomatic number of 1 would be beneficial as this would bound those digraphs which might admit a unique independent dominating set. Furthermore, characterizing all digraphs which admit a unique independent dominating set would be an interesting problem, most notably because these graphs are a subset of the digraphs with idomatic number 1. 

Another interesting direction to explore would be to study the impact of a change in the orientation of a fixed underlying graph on the domatic number. While one may find the idomatic number for a particular orientation of a given graph, the idomatic number for the entire family, i.e., the minimum idomatic number over all possible orientations of a given graph, is a much more important problem, especially given the widespread application of independent dominating sets in ad hoc networks. A natural place to begin this inquiry could be to study the impact of reversals of orientation.

\bibliography{IDDG}
\end{document}